\theoremstyle{thmstyleone}%
\newtheorem{theorem}{Theorem}
\newtheorem{proposition}[theorem]{Proposition}%
\newtheorem{corollary}[theorem]{Corollary}
\newtheorem{schneider}[theorem]{Schneider's Theorem}
\theoremstyle{thmstyletwo}%
\newtheorem{example}[theorem]{Example}%
\newtheorem{remark}[theorem]{Remark}%
\theoremstyle{thmstylethree}%
\newtheorem{definition}[theorem]{Definition}%
\DeclareMathOperator{\arcsl}{arcsl}
\DeclareMathOperator{\arcs}{arcs}
\begin{document}

\title[Hyperelliptic values of the Gamma function]
{Hyperelliptic values of the Gamma function}


\author{\fnm{Jan} \sur{Lügering}}


\abstract{We show how to calculate particular values of the Gamma function for specific rational arguments in the interval (0,1) without using the Elliptic K-function. Instead we use transcendental constants or periods defined by hyperelliptic integrals.}


\keywords{Gamma function, hyperelliptic integral, elliptic integral}


\pacs[MSC Classification]{26A09, 30D30, 33B15, 33E05}

\maketitle

\section{Introduction}\label{sec1}
The Gamma function can be defined for $Re(x) > 0$ as
\begin{equation}
\Gamma(x) := \int \limits_{0}^{\infty} t^{x-1} \cdot e^{-t} \ dt
\end{equation}
Recall the Legendre duplication formula for $x > 0$:
\begin{equation}
\Gamma(x) \cdot \Gamma(x + \frac{1}{2}) = \frac{\sqrt{\pi}}{2^{2x-1}} \cdot \Gamma(2x)
\end{equation}
and Gauss multiplication formula for $x \in \mathbb{C} \setminus -\mathbb{N}_{0}$:
\begin{equation}
\forall \ n \in \mathbb{N}: \ \Gamma(x) \cdot \Gamma(x + \frac{1}{n}) \cdot ... \cdot \Gamma(x + \frac{n-1}{n}) = \frac{(2 \pi)^{\frac{n-1}{2}}}{n^{nx - \frac{1}{2}}} \cdot \Gamma(nx)
\end{equation}
The Euler Beta function can be defined for $Re(w)>0$ and $Re(z)>0$ as:
\begin{equation}
B(w,z) := \int \limits_{0}^{1} t^{w-1} \cdot (1-t)^{z-1} \ dt
\end{equation}
The central identity in the theory of the Beta function is \cite{remmert}:
\begin{equation}
\forall \ Re(w),Re(z) > 0: \ B(w,z) = \frac{\Gamma(w) \cdot \Gamma(z)}{\Gamma(z+w)}
\end{equation}
We also have the Euler relation:
\begin{equation}
\forall \ x \in \mathbb{C} \setminus \mathbb{Z}: \ \Gamma(x) \cdot \Gamma(1-x) = \frac{\pi}{\sin(\pi x)} 
\end{equation}
The lemniscatic arcsine function is defined as \cite{siegel}:
\begin{equation}
\arcsl(x) := \int \limits_{0}^{x} \frac{1}{\sqrt{1-r^4}} \ dr
\end{equation}
with the lemniscate constant
\begin{equation}
\varpi := 2 \cdot \int \limits_{0}^{1} \frac{1}{\sqrt{1-r^4}} \ dr
\end{equation}
in analogy to
\begin{equation}
\arcsin(x) := \int \limits_{0}^{x} \frac{1}{\sqrt{1-r^2}} \ dr
\end{equation}
and
\begin{equation}
\pi := 2 \cdot \int \limits_{0}^{1} \frac{1}{\sqrt{1-r^2}} \ dr
\end{equation} \\
We have the fascinating integral identity \cite{euler}:
\begin{equation}
A \cdot B := (\int \limits_{0}^{1} \frac{1}{\sqrt{1-r^4}} \ dr) \cdot (\int \limits_{0}^{1} \frac{r^2}{\sqrt{1-r^4}} \ dr) = \frac{\pi}{4}
\end{equation} 
Our goal is to generalize these functions and identities and apply this knowledge to calculate fractional values of the Gamma function.\\ \\ \\
\section{Particular values of the Gamma function}\label{sec2}
\begin{definition}\label{def1}
We define for $s \geq 1$:
\begin{equation}
\pi_s := 2 \cdot \int \limits_{0}^{1} \frac{1}{\sqrt{1-x^s}} \ dx
\end{equation}
\end{definition}
\begin{definition}\label{df2}
The generalised arcsine is defined for $n \in \mathbb{N}$ as:
\begin{equation}
\arcs_{n}(x) := \int \limits_{0}^{x} \frac{1}{\sqrt{1-r^n}} \ dr, \ x \in V_n := \left\{\begin{array}{lr}
        [-1,1] & \text{for n even}\\
        (-\infty, 1], & \text{for n odd}\\
        \end{array}\right\}
\end{equation}
\end{definition}
\begin{proposition}\label{pr3}
\begin{equation}
\forall \ \lvert x \rvert < 1: \arcs_n(x) = \sum_{k=0}^{\infty} \binom{2k}{k} \frac{x^{nk+1}}{4^{k} \cdot (nk+1)}
\end{equation}
and in particular we have
\begin{equation}
\frac{\pi_n}{2} = \sum_{k=0}^{\infty} \binom{2k}{k} \frac{1}{4^{k} \cdot (nk+1)}
\end{equation}
\end{proposition}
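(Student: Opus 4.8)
The plan is to expand the integrand as a power series and integrate it term by term. The starting point is the generalised binomial series $(1-u)^{-1/2} = \sum_{k=0}^{\infty} \binom{-1/2}{k} (-u)^k$, valid for $|u| < 1$. First I would rewrite the coefficient in the form appearing in the statement. Writing out the generalised binomial coefficient and collecting factors gives
\begin{equation}
\binom{-1/2}{k} (-1)^k = \frac{1 \cdot 3 \cdots (2k-1)}{2^k \, k!} = \frac{(2k)!}{4^k (k!)^2} = \binom{2k}{k} \frac{1}{4^k},
\end{equation}
so that $(1-u)^{-1/2} = \sum_{k=0}^{\infty} \binom{2k}{k} u^k / 4^k$ for $|u| < 1$.

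Next I would substitute $u = r^n$, which is admissible whenever $|r| < 1$ since then $|r^n| < 1$. Fixing $x$ with $|x| < 1$, the resulting series in $r$ has radius of convergence $1$ and hence converges uniformly on the compact interval with endpoints $0$ and $x$; this justifies interchanging summation and integration:
\begin{equation}
arcs_n(x) = \int_{0}^{x} \sum_{k=0}^{\infty} \binom{2k}{k} \frac{r^{nk}}{4^k} \, dr = \sum_{k=0}^{\infty} \binom{2k}{k} \frac{1}{4^k} \int_{0}^{x} r^{nk} \, dr = \sum_{k=0}^{\infty} \binom{2k}{k} \frac{x^{nk+1}}{4^k (nk+1)},
\end{equation}
which is the first formula.

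For the particular value I would let $x \to 1^-$. The left-hand side tends to $\int_0^1 (1-r^n)^{-1/2} \, dr = \pi_n/2$; this improper integral converges because near $r = 1$ one has $1 - r^n \sim n(1-r)$, so the integrand is asymptotic to $(n(1-r))^{-1/2}$, which is integrable. For the right-hand side I would invoke Abel's limit theorem, whose hypothesis is that the series converges at $x = 1$. Using the asymptotic $\binom{2k}{k} \sim 4^k / \sqrt{\pi k}$, the general term at $x = 1$ behaves like $1/(n \sqrt{\pi}\, k^{3/2})$, so the numerical series converges by comparison with $\sum k^{-3/2}$. Abel's theorem then gives $\pi_n/2 = \lim_{x \to 1^-} arcs_n(x) = \sum_{k=0}^{\infty} \binom{2k}{k} / (4^k (nk+1))$.

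I expect the main obstacle to be precisely this passage to the boundary $x = 1$, not the elementary term-by-term integration. The endpoint $r = 1$ is exactly where the integrand has its integrable singularity, so one cannot simply set $x = 1$ inside the uniform-convergence argument that works on $(-1,1)$. The correct device is Abel's theorem, and the verification of its hypothesis relies on the extra factor $k^{-1/2}$ supplied by the central binomial asymptotic, which is what turns the borderline-divergent $\sum 1/(nk+1)$ into the convergent $\sum k^{-3/2}$.
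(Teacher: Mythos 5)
Your proposal is correct and follows essentially the same route as the paper: binomial-series expansion of the integrand, term-by-term integration justified by uniform convergence on compact subintervals of $(-1,1)$, and Abel's limit theorem to pass to $x=1$. The only cosmetic difference is that you verify convergence of the endpoint series via the Stirling asymptotic $\binom{2k}{k}\sim 4^k/\sqrt{\pi k}$, whereas the paper compares it with the arcsine series (the case $n=2$); both are valid.
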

\begin{proof}
Let $ \lvert x \rvert \leq r < 1$ be arbitrary.
\begin{equation*}
\arcs_n(x) = \int \limits_{0}^{x} \sum_{k=0}^{\infty} \binom{-1/2}{k} \cdot (-1)^{k} \cdot y^{nk} \ dy = \int \limits_{0}^{x} \sum_{k=0}^{\infty} \binom{2k}{k} \cdot \frac{y^{nk}}{4^{k}} \ dy
\end{equation*}
Since $\sum_{k=0}^{\infty} \binom{2k}{k} \cdot \frac{y^{nk}}{4^{k}}$ converges uniformally for $\lvert y \rvert \leq r$ we obtain:
\begin{equation*}
= \sum_{k=0}^{\infty} \binom{2k}{k} \frac{x^{nk+1}}{4^{k} \cdot (nk+1)} \ , \forall \ \lvert x \rvert \leq r
\end{equation*}
For every fixed $ \lvert x \rvert < 1$  and in particular for $x=1$, the series converges by comparison with the arcsine series. Abel's limit theorem yields the result.
\end{proof}
Our next goal is to find a halving formula by proving an auxiliary formula.
\begin{proposition}\label{pr4}
\begin{equation}
\forall n \in \mathbb{N}: \ \frac{\Gamma(\frac{1}{n})^{2} \cdot 2^{\frac{2}{n} - 1}}{\Gamma(\frac{2}{n}) \cdot n} = \frac{\pi_n}{2}
\end{equation}
\end{proposition}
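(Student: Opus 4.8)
The plan is to reduce the quantity $\pi_n/2$ to a Beta integral, convert it into a ratio of Gamma functions, and then eliminate the half-integer shift using Legendre's duplication formula. First I would start from Definition \ref{def1}, which gives $\frac{\pi_n}{2} = \int_0^1 (1-r^n)^{-1/2}\,dr$. The natural move is the substitution $u = r^n$, so that $r = u^{1/n}$ and $dr = \tfrac{1}{n} u^{1/n - 1}\,du$, turning the integral into
\begin{equation*}
\frac{\pi_n}{2} = \frac{1}{n} \int_0^1 u^{\frac{1}{n} - 1} (1-u)^{-\frac{1}{2}}\,du.
\end{equation*}
This is precisely $\frac{1}{n} B\!\left(\frac{1}{n}, \frac{1}{2}\right)$ according to the integral definition of the Beta function given in the introduction.

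Next I would invoke the central Beta--Gamma identity together with the special value $\Gamma(\tfrac{1}{2}) = \sqrt{\pi}$ to obtain
\begin{equation*}
\frac{\pi_n}{2} = \frac{1}{n} \cdot \frac{\Gamma(\frac{1}{n})\,\Gamma(\frac{1}{2})}{\Gamma(\frac{1}{n} + \frac{1}{2})} = \frac{\sqrt{\pi}\,\Gamma(\frac{1}{n})}{n\,\Gamma(\frac{1}{n} + \frac{1}{2})}.
\end{equation*}
At this point the target expression contains $\Gamma(\frac{2}{n})$ rather than the shifted argument $\Gamma(\frac{1}{n} + \frac{1}{2})$, so the remaining work is to trade one for the other.

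The decisive step is Legendre's duplication formula with $x = \frac{1}{n}$, which reads $\Gamma(\frac{1}{n})\,\Gamma(\frac{1}{n} + \frac{1}{2}) = \frac{\sqrt{\pi}}{2^{2/n - 1}}\,\Gamma(\frac{2}{n})$ and hence $\Gamma(\frac{1}{n} + \frac{1}{2}) = \frac{\sqrt{\pi}\,\Gamma(\frac{2}{n})}{2^{2/n - 1}\,\Gamma(\frac{1}{n})}$. Substituting this into the expression above and simplifying the factors of $\sqrt{\pi}$ should collapse everything to $\frac{\Gamma(\frac{1}{n})^2 \cdot 2^{2/n - 1}}{\Gamma(\frac{2}{n}) \cdot n}$, which is exactly the claimed identity. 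I do not anticipate a serious obstacle here: the argument is essentially a chain of known formulas. The only points requiring a word of care are the legitimacy of the substitution for an improper integral, since $(1-r^n)^{-1/2}$ behaves like $(1-r)^{-1/2}$ near $r=1$ and is therefore integrable, and the bookkeeping of the powers of $2$ and of $\sqrt{\pi}$ when combining the two Gamma identities.
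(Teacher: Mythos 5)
Your proposal is correct and follows essentially the same route as the paper: both identify $\pi_n/2$ with $\tfrac{1}{n}B(\tfrac{1}{n},\tfrac{1}{2})$ via the substitution $u = r^n$ and then eliminate $\Gamma(\tfrac{1}{n}+\tfrac{1}{2})$ using Legendre's duplication formula at $x=\tfrac{1}{n}$. The only cosmetic difference is that the paper evaluates $B(\tfrac{1}{2},\tfrac{m}{n})$ for general $m$ before specialising to $m=1$.
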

\begin{proof}
Let $m,n \in \mathbb{N}$. \\ \\
On the one hand we have:
\begin{equation*}
B(\frac{1}{2}, \frac{m}{n}) = \frac{\sqrt{\pi} \cdot \Gamma(\frac{m}{n})}{\Gamma(\frac{m}{n} + \frac{1}{2})}
\end{equation*}
On the other hand:
\begin{equation*}
B(\frac{1}{2}, \frac{m}{n}) = B(\frac{m}{n}, \frac{1}{2}) = \int \limits_{0}^{1} t^{\frac{m}{n} - 1} \cdot (1-t)^{- \frac{1}{2}} \ dt
\end{equation*}
Substituting $t := u^n$ yields:
\begin{equation*}
= n \cdot \int \limits_{0}^{1} \frac{u^{m-1}}{\sqrt{1 - u^{n}}} \ du
\end{equation*}
Therefore 
\begin{equation*}
\frac{\sqrt{\pi} \cdot \Gamma(\frac{m}{n})}{\Gamma(\frac{m}{n} + \frac{1}{2})} = n \cdot \int \limits_{0}^{1} \frac{u^{m-1}}{\sqrt{1 - u^{n}}} \ du
\end{equation*}
For $m=1$ we obtain:
\begin{equation*}
\frac{\sqrt{\pi} \cdot \Gamma(\frac{1}{n})}{\Gamma(\frac{1}{n} + \frac{1}{2}) \cdot n} = \int \limits_{0}^{1} \frac{1}{\sqrt{1 - u^{n}}} \ du = \frac{\pi_n}{2}
\end{equation*}
\begin{equation*}
\implies \frac{\sqrt{\pi}}{\Gamma(\frac{1}{n} + \frac{1}{2})} = \frac{n \cdot \pi_n}{\Gamma(\frac{1}{n}) \cdot 2}
\end{equation*} \\
Rearranging the Legendre duplication formula for $x > 0$ yields:
\begin{equation*}
\frac{\sqrt{\pi}}{\Gamma(x + \frac{1}{2})} = \frac{\Gamma(x) \cdot 2^{2x - 1}}{\Gamma(2x)}
\end{equation*}
Setting $x = \frac{1}{n}$:
\begin{equation*}
\frac{\sqrt{\pi}}{\Gamma(\frac{1}{n} + \frac{1}{2})} = \frac{\Gamma(\frac{1}{n}) \cdot 2^{\frac{2}{n} - 1}}{\Gamma(\frac{2}{n})}
\end{equation*} \\
Comparing both equations implies:
\begin{equation*}
\frac{n \cdot \pi_n}{\Gamma(\frac{1}{n}) \cdot 2} = \frac{\Gamma(\frac{1}{n}) \cdot 2^{\frac{2}{n} - 1}}{\Gamma(\frac{2}{n})}
\end{equation*}
\end{proof}
\begin{theorem}\label{th5}
For $s \geq 1$ we have:
\begin{equation}
\Gamma(\frac{1}{2s}) = \sqrt{\frac{\pi_{2s} \cdot \Gamma(\frac{1}{s}) \cdot 2s}{2^{\frac{1}{s}}}}
\end{equation}
\end{theorem}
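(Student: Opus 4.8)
The plan is to obtain Theorem \ref{th5} as a direct specialisation of Proposition \ref{pr4}. Since Proposition \ref{pr4} holds for every $n \in \mathbb{N}$, I would first replace $n$ by $2n$ throughout, which is permissible because $2n$ is again a natural number. This single substitution immediately links $\Gamma(\frac{1}{2n})$, $\Gamma(\frac{2}{2n})$ and $\pi_{2n}$ in one identity, and the rest of the argument is purely algebraic.

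The key simplification is that $\frac{2}{2n} = \frac{1}{n}$, so the factor $\Gamma(\frac{2}{2n})$ appearing in Proposition \ref{pr4} collapses to $\Gamma(\frac{1}{n})$, while the power $2^{\frac{2}{2n}-1}$ becomes $2^{\frac{1}{n}-1}$. After this substitution the identity reads
\begin{equation*}
\frac{\Gamma(\frac{1}{2n})^2 \cdot 2^{\frac{1}{n}-1}}{\Gamma(\frac{1}{n}) \cdot 2n} = \frac{\pi_{2n}}{2}.
\end{equation*}
From here I would solve for $\Gamma(\frac{1}{2n})^2$, observing that the factor $2$ coming from $\frac{\pi_{2n}}{2}$ combines with $2^{\frac{1}{n}-1}$ to leave exactly $2^{\frac{1}{n}}$ in the denominator. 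Extracting the positive square root then produces the claimed formula.

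I do not anticipate any genuine obstacle: the proof is essentially a one-line specialisation of Proposition \ref{pr4} followed by elementary rearrangement. The only step deserving explicit comment is the passage to the square root, where the positive branch is the correct choice because $\Gamma$ is strictly positive on the positive real axis; hence no ambiguity of sign arises and the stated identity follows.
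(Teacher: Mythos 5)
Your proposal is correct and follows exactly the paper's own route: the paper's proof is literally ``in Proposition \ref{pr4} set $n=2l$ and rearrange,'' and your substitution $n \mapsto 2n$ with the simplifications $\Gamma(\frac{2}{2n})=\Gamma(\frac{1}{n})$ and $2\cdot 2^{\frac{1}{n}-1}=2^{\frac{1}{n}}$ is that same computation carried out explicitly. Your added remark on choosing the positive square root (since $\Gamma>0$ on the positive reals) is a small but welcome point the paper leaves implicit.
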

\begin{proof}
Let $l \in \mathbb{N}$. \\
In Proposition $\ref{pr4}$ set $n = 2l$ and rearrange. Observe that its proof also holds for $n \in \mathbb{R}_{\geq 1}$.
\end{proof}
\begin{corollary}\label{co6}
\begin{equation}
\forall n \in \mathbb{N}: \ \Gamma(\frac{1}{2^{n}}) = \prod_{k=1}^{n} (2^{n-1} \cdot \pi_{2^{k}})^{\frac{1}{2^{(n-k+1)}}}
\end{equation}
\end{corollary}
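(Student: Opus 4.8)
The plan is to prove the identity by induction on $n$, using Theorem~\ref{th5} as the mechanism that peels off one factor of the product at each step. For the base case $n=1$, the right-hand side collapses to $(2^{0}\cdot\pi_{2})^{1/2}=\sqrt{\pi_2}$. By Definition~\ref{def1} with $n=2$ we have $\pi_2 = 2\int_0^1 (1-r^2)^{-1/2}\,dr = \pi$, so the right-hand side equals $\sqrt{\pi}=\Gamma(\tfrac12)$, matching the left-hand side.

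For the inductive step, assume the formula holds for $n$. To reach $\Gamma(\tfrac{1}{2^{n+1}})$, I would apply Theorem~\ref{th5} with its parameter set equal to $2^{n}$ (that is, replace the ``$n$'' there by $2^{n}$), which expresses the new value through the old one:
\[
\Gamma\!\left(\tfrac{1}{2^{n+1}}\right) = \left(\frac{\pi_{2^{n+1}}\cdot\Gamma(1/2^{n})\cdot 2^{n+1}}{2^{1/2^{n}}}\right)^{1/2}.
\]
Substituting the inductive hypothesis for $\Gamma(1/2^{n})$ and distributing the outer square root into the product, each exponent $1/2^{(n-k+1)}$ of the hypothesis turns into $1/2^{(n-k+2)}$, which is exactly the exponent $1/2^{((n+1)-k+1)}$ demanded for the $n+1$ case. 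Thus the factors $\pi_{2^{k}}$ for $1\le k\le n$ already appear with the correct powers, and the stray factor $\pi_{2^{n+1}}^{1/2}$ coming out front supplies precisely the missing $k=n+1$ term of the target product.

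What remains is to reconcile the powers of $2$, and this is where the only real computation lives. Collecting them, the recursion produces the exponent
\[
\tfrac{n+1}{2}-\tfrac{1}{2^{n+1}}+(n-1)\sum_{k=1}^{n}2^{-(n-k+2)},
\]
while the $n+1$ case requires $\,n\sum_{k=1}^{n+1}2^{-(n-k+2)}$. The key step is to evaluate these two finite geometric sums: reindexing by $j=n-k+2$ converts them into $\sum_{j=2}^{n+1}2^{-j}=\tfrac12-2^{-(n+1)}$ and $\sum_{j=1}^{n+1}2^{-j}=1-2^{-(n+1)}$, respectively. Inserting these values, both exponents simplify to $n-n\,2^{-(n+1)}$, so the powers of $2$ agree and the induction closes.

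I expect the bookkeeping of these exponents of $2$ to be the main (and essentially the sole) obstacle: the $\pi_{2^{k}}$ factors line up automatically once the outer square root is absorbed, and the whole argument hinges on correctly tracking the doubling of denominators through the square root and then summing the two geometric series. No analytic input beyond Theorem~\ref{th5} and the value $\pi_2=\pi$ is needed.
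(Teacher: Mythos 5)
Your proposal is correct and follows essentially the same route as the paper: induction on $n$, with the base case $\Gamma(\tfrac12)=\sqrt{\pi_2}=\sqrt{\pi}$ and the inductive step driven by Theorem~\ref{th5} applied with its parameter equal to $2^{n}$; your exponent bookkeeping (both sides reducing to $n-n\,2^{-(n+1)}$) checks out and matches the paper's ``transfer the powers $\sum_{k=1}^{n}2^{-k}=1-2^{-n}$ into the product'' step, just phrased as an explicit geometric-sum computation.
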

\begin{proof}
Let $n \in \mathbb{N}$. \\ \\
Initial case) For $n=1$: $\Gamma(\frac{1}{2}) = \sqrt{\pi_2} = \sqrt{\pi}$ \\ \\
Induction hypothesis) Suppose that our claim holds for arbitrary, but once chosen, fixed $n \in \mathbb{N}$. \\ \\
Induction step) For $n \mapsto n+1$: 
\begin{equation*}
\Gamma(\frac{1}{2^{n+1}}) = \sqrt{\pi_{2^{n+1}} \cdot \Gamma(\frac{1}{2^n}) \cdot 2^n \cdot 2^{\frac{2^n - 1}{2^n}}}
\end{equation*}
\begin{equation*}
= \sqrt{\pi_{2^{n+1}} \cdot 2^{n+1 - \frac{1}{2^n}}} \cdot \sqrt{\prod_{k=1}^{n} (2^{n-1} \cdot \pi_{2^{k}})^{\frac{1}{2^{(n-k+1)}}}}
\end{equation*} \\
The 2 at the left now transfers the powers $\sum_{k=1}^{n} (\frac{1}{2})^k = 1 - \frac{1}{2^n}$ into the product. \\
\begin{equation*}
= \sqrt{\pi_{2^{n+1}} \cdot 2^{n}} \cdot \sqrt{\prod_{k=1}^{n} (2^{n} \cdot \pi_{2^{k}})^{\frac{1}{2^{(n-k+1)}}}} \ = \ \prod_{k=1}^{n+1} (2^{n} \cdot \pi_{2^{k}})^{\frac{1}{2^{(n-k+2)}}}
\end{equation*} \\
By the principle of induction, our claim holds for all $n \in \mathbb{N}$.
\end{proof}
\begin{example}\label{ex7}
\begin{equation}
\Gamma(\frac{1}{2}) = \sqrt{\pi_2} = \sqrt{\pi}
\end{equation}
\begin{equation}
\Gamma(\frac{1}{4}) = \sqrt{2 \pi_4 \sqrt{2\pi_2}} = \sqrt{2 \varpi \sqrt{2\pi}}
\end{equation}
The Euler relation yields:
\begin{equation}
\Gamma(\frac{3}{4}) = \frac{\pi}{\sqrt{\varpi \sqrt{2 \pi}}}
\end{equation}
The pattern continues:
\begin{equation}
\Gamma(\frac{1}{8}) = \sqrt{4 \pi_8 \sqrt{4 \pi_4 \sqrt{4\pi_2}}}
\end{equation} \\
In general we obtain for the "quarter-length" of the "unit curve" $q_n:= \frac{\pi_n}{2}$: \\
\begin{equation}
\Gamma(\frac{1}{2^{n}}) = \sqrt{2^n \cdot q_{2^n} \cdot \sqrt{2^n \cdot q_{2^{(n-1)} \cdot \sqrt{... \cdot \sqrt{2^n \cdot q_{2}}}}}}
\end{equation}
\end{example}
We state the following definition and theorem without proof as it follows the same reasoning as before. We will never use it again in this paper.
\begin{definition}\label{def8}
For $l,n,k \in \mathbb{N}$ and $l \geq 2$ and $k < l$ we define
\begin{equation}
\pi_{l,n,k} := 2 \cdot \int \limits_{0}^{1} \frac{1}{\sqrt[l]{(1-t^n)^{k}}} \ dt
\end{equation}
\end{definition}
\begin{theorem}\label{th10}
\begin{equation}
\Gamma(\frac{1}{l \cdot n}) = \sqrt{\frac{\Gamma(\frac{1}{n}) \cdot l \cdot n \cdot (2 \pi)^{\frac{l-1}{2}} \cdot l^{\frac{1}{2} - \frac{1}{n}} \cdot \pi_{l,l \cdot n,l-k}}{\Gamma(\frac{k}{l}) \cdot 2 \cdot \prod \limits_{j=1,j \neq k}^{l-1} \Gamma(\frac{1}{l \cdot n} + \frac{j}{l})}}
\end{equation}
For $n,k,l \in \mathbb{N}$ and $l \geq 2$, where $\ 1 \leq k \leq l-1$ is fixed.
\end{theorem}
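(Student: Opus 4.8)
The plan is to obtain Theorem \ref{th10} from Proposition \ref{pr9} in exactly the same way that Theorem \ref{th5} was deduced from Proposition \ref{pr4}: the auxiliary proposition already encodes the complete relation between the Gamma values and the hyperelliptic constant, so the theorem is merely an algebraic rearrangement after a suitable substitution. Concretely, I would fix $n,k,l \in \mathbb{N}$ with $l \geq 2$ and $1 \leq k \leq l-1$, and replace $n$ by $l \cdot n$ throughout Proposition \ref{pr9}.

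Performing this substitution, the identity of Proposition \ref{pr9} becomes
\begin{equation*}
\frac{l^{\frac{l}{l \cdot n} - \frac{1}{2}} \cdot \Gamma(\frac{1}{l \cdot n})^{2} \cdot \Gamma(\frac{k}{l}) \cdot \prod \limits_{j=1,j \neq k}^{l-1} \Gamma(\frac{1}{l \cdot n} + \frac{j}{l})}{(2 \pi)^{\frac{l-1}{2}} \cdot \Gamma(\frac{l}{l \cdot n}) \cdot l \cdot n} = \frac{\pi_{l,l \cdot n,l-k}}{2}.
\end{equation*}
The key simplifications are the two collapses coming from $\frac{l}{l \cdot n} = \frac{1}{n}$: this turns the exponent of $l$ into $\frac{1}{n} - \frac{1}{2}$ and, crucially, turns the denominator factor $\Gamma(\frac{l}{l \cdot n})$ into $\Gamma(\frac{1}{n})$. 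The latter is precisely what lets the ``lower'' datum $\Gamma(\frac{1}{n})$ appear on the right-hand side of the final formula, while the product and the remaining Gamma arguments transform verbatim into those of the theorem.

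It then remains to isolate $\Gamma(\frac{1}{l \cdot n})^{2}$. Moving every factor except $\Gamma(\frac{1}{l \cdot n})^{2}$ to the other side, so that the power $l^{\frac{1}{n}-\frac{1}{2}}$ descends into the denominator and is rewritten there as the numerator factor $l^{\frac{1}{2}-\frac{1}{n}}$, produces exactly the square of the claimed expression. Since $\frac{1}{l \cdot n} > 0$ we have $\Gamma(\frac{1}{l \cdot n}) > 0$, so taking the positive square root is legitimate and yields the statement; the hypotheses $l \geq 2$ and $1 \leq k \leq l-1$ are inherited unchanged from Proposition \ref{pr9}. I do not expect any genuine obstacle here, since all the analytic content already resides in Proposition \ref{pr9}. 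The only point requiring care is the bookkeeping of the $l$-power exponent and its reciprocal together with the collapse of $\Gamma(\frac{l}{l \cdot n})$ to $\Gamma(\frac{1}{n})$, which is what makes this a genuine reduction of $\Gamma(\frac{1}{l \cdot n})$ to $\Gamma(\frac{1}{n})$, the boundary values $\Gamma(\frac{k}{l})$ and $\Gamma(\frac{1}{l\cdot n}+\frac{j}{l})$, and the hyperelliptic constant $\pi_{l,l\cdot n,l-k}$.
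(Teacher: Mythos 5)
Your proposal is correct and is exactly the paper's argument: the paper's proof of Theorem \ref{th10} is the one-line instruction ``In Proposition \ref{pr9} replace $n$ by $l \cdot n$ and rearrange,'' and you have simply carried out that substitution and rearrangement explicitly, with the correct collapses $\Gamma(\frac{l}{l\cdot n}) = \Gamma(\frac{1}{n})$ and $l^{\frac{l}{l\cdot n}-\frac{1}{2}} = l^{\frac{1}{n}-\frac{1}{2}}$ and the legitimate positive square root. No gaps.
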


\begin{remark}\label{re13}
\begin{equation}
\Gamma(\frac{1}{3}) = \sqrt[3]{\sqrt[2]{3} \cdot \pi_{3} \cdot \pi_{2} \cdot \sqrt[3]{2}}
\end{equation}
\end{remark}
\begin{proof}
In the proof of Proposition $\ref{pr4}$ we deduced: 
\begin{equation}\label{eqkk}
\frac{\sqrt{\pi} \cdot \Gamma(\frac{1}{n})}{\Gamma(\frac{1}{n} + \frac{1}{2}) \cdot n} = \int \limits_{0}^{1} \frac{1}{\sqrt{1 - u^{n}}} \ du
\end{equation}
Setting $n=3$: 
\begin{equation*}
\frac{\sqrt{\pi} \cdot \Gamma(\frac{1}{3})}{\Gamma(\frac{5}{6}) \cdot 3} = \int \limits_{0}^{1} \frac{1}{\sqrt{1 - u^{3}}} \ du
\end{equation*}
By Legendres duplication formula we have:
\begin{equation*}
\Gamma(\frac{5}{6}) = \Gamma(\frac{1}{3} + \frac{1}{2}) = \frac{\frac{\sqrt{\pi}}{2^{\frac{2}{3} - 1}} \cdot \Gamma(\frac{2}{3})}{\Gamma(\frac{1}{3})}
\end{equation*}
Applying the Euler relation, we get in total: 
\begin{equation}\label{eqk}
\frac{\Gamma(\frac{1}{3})^3}{\sqrt{3} \cdot \pi \cdot \sqrt[3]{2^4}} = \int \limits_{0}^{1} \frac{1}{\sqrt{1 - u^{3}}} \ du = \frac{\pi_3}{2}
\end{equation}
\end{proof}
The integral identities $\ref{eqk}$ and $\ref{eqkk}$ were remarked by Konrad Königsberger and can be found in his book \cite{koenigsberger}. \\
In the book \cite{freitag} by Freitag and Busam, we find the identity 
\begin{equation}
\Gamma(\frac{1}{6}) = 2^{- \frac{1}{3}} \cdot (\frac{3}{\pi})^{\frac{1}{2}} \cdot \Gamma(\frac{1}{3})^2
\end{equation}
Applying the halving formula to $\ref{re13}$ we obtain
\begin{equation}
\Gamma(\frac{1}{6}) = \sqrt[2]{\pi_6 \cdot 3 \cdot \sqrt[3]{4 \cdot \sqrt[3]{2} \cdot \sqrt[2]{3} \cdot \pi_2 \cdot \pi_3}}
\end{equation}
plugging in the values into the identity, we obtain
\begin{remark}\label{re14}
\begin{equation}
\pi_6 = \sqrt{3} \cdot \frac{\pi_3}{2}
\end{equation}
\end{remark}
This is plausible, because by definition
\begin{remark}\label{re15}
\begin{equation}
\lim_{n \to \infty}  q_n = \lim_{n \to \infty} \frac{\pi_n}{2} = 1
\end{equation}
\end{remark}

\section{Table of Values}
Note that all the appearing constants $\pi_n$ are transcendental and that the fractional constant $\pi_{5/2}$ is also transcendental.
\begin{equation}
\Gamma(\frac{1}{2}) = \sqrt{\pi_2} = \sqrt{\pi}
\end{equation}
\begin{equation}
\Gamma(\frac{1}{3}) = \sqrt[3]{\sqrt[2]{3} \cdot \pi_{3} \cdot \pi_{2} \cdot \sqrt[3]{2}}
\end{equation}
\begin{equation}
\Gamma(\frac{1}{4}) = \sqrt{2 \pi_4 \sqrt{2\pi_2}} = \sqrt{2 \varpi \sqrt{2\pi}}
\end{equation}
\begin{equation}
\Gamma(\frac{1}{5}) = \sqrt[5]{\frac{5^3 \cdot \pi_{5}^{2} \cdot \pi_{5/2} \cdot \pi_2}{2^{\frac{13}{5}} \cdot \sqrt{\frac{5 - \sqrt{5}}{8}}}}
\end{equation}
\begin{equation}
\Gamma(\frac{1}{6}) = \sqrt[2]{\frac{\sqrt{3}}{2} \cdot {\pi_3} \cdot 3 \cdot \sqrt[3]{4 \cdot \sqrt[3]{2} \cdot \sqrt[2]{3} \cdot \pi_2 \cdot \pi_3}}
\end{equation}
\begin{equation}
\Gamma(\frac{1}{8}) = \sqrt{4 \pi_8 \sqrt{4 \pi_4 \sqrt{4\pi_2}}}
\end{equation}

\section{Transcendence}
Now what is the nature of the constants $\pi_s$? Are they irrational, or even transcendental?
Thanks to a theorem by Theodor Schneider in \cite{schneider}, we immediately conclude the answer for the rational case and in particular for $n \geq 2$.
\begin{schneider}\label{th17}
For all $\ a, b \in \mathbb{Q} \setminus \mathbb{Z}$ it holds, that $B(a,b)$ is transcendental.
\end{schneider}
\begin{corollary}\label{co18}
For all $n \geq 2$ it holds, that $\pi_n$ is transcendental.
\end{corollary}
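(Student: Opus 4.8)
The plan is to recognise $\pi_n$ as, up to a harmless rational factor, a value of the Beta function at rational non-integer arguments, so that Schneider's Theorem applies directly.

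First I would extract the needed identity from the proof of Proposition \ref{pr4}. There, before specialising to the hyperelliptic constant, we established for $m = 1$ the relation
\begin{equation*}
B\left(\frac{1}{2}, \frac{1}{n}\right) = n \cdot \int \limits_0^1 \frac{1}{\sqrt{1 - u^n}} \, du = \frac{n \cdot \pi_n}{2},
\end{equation*}
which rearranges to $\pi_n = \frac{2}{n} \cdot B\left(\frac{1}{2}, \frac{1}{n}\right)$. This reduces the whole question to the transcendence of a single Beta value.

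Next I would check the hypotheses of Schneider's Theorem. Here the first argument $\frac{1}{2}$ is rational and not an integer, and for every $n \geq 2$ the second argument $\frac{1}{n}$ lies in $(0, \frac{1}{2}]$, hence is also rational and not an integer. Thus both arguments lie in $\mathbb{Q} \setminus \mathbb{Z}$, and Schneider's Theorem yields at once that $B\left(\frac{1}{2}, \frac{1}{n}\right)$ is transcendental.

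Finally I would conclude: since $\frac{2}{n}$ is a nonzero rational number, $\pi_n = \frac{2}{n} \cdot B\left(\frac{1}{2}, \frac{1}{n}\right)$ is a nonzero rational multiple of a transcendental number, and is therefore itself transcendental. There is essentially no obstacle in this argument; the only point requiring (trivial) care is verifying that $\frac{1}{n}$ is never an integer in the stated range, which fails solely at $n = 1$ and is exactly why the hypothesis $n \geq 2$ is imposed.
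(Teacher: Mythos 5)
Your proposal is correct and follows essentially the same route as the paper: both extract the identity $\pi_n = \frac{2}{n}\,B(\frac{1}{2},\frac{1}{n})$ from the proof of Proposition \ref{pr4} and then invoke Schneider's Theorem, the only cosmetic difference being that the paper phrases the last step as a proof by contradiction while you argue directly that a nonzero rational multiple of a transcendental number is transcendental. Your explicit check that $\frac{1}{2}, \frac{1}{n} \in \mathbb{Q} \setminus \mathbb{Z}$ for $n \geq 2$ is a small point the paper leaves implicit.
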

\begin{proof}
Let $n \geq 2$ be arbitrary. \\ \\
In the proof of proposition $\ref{pr4}$, we deduced: 
\begin{equation*}
B(\frac{1}{2}, \frac{m}{n}) = n \cdot \int \limits_{0}^{1} \frac{u^{m-1}}{\sqrt{1 - u^{n}}} \ du
\end{equation*}
\begin{equation*}
\implies \frac{2}{n} \cdot B(\frac{1}{2}, \frac{1}{n}) = \pi_n
\end{equation*}
Suppose, $\pi_n$ would be algebraic.
\begin{equation*}
\implies \pi_n \cdot \frac{n}{2} = B(\frac{1}{2}, \frac{1}{n})
\end{equation*}
algebraic, which contradicts Schneider's theorem.
\end{proof}

\section{Generalising Euler's Arcsine Proof}
\begin{theorem}
For $n \in \mathbb{N}$ we have: \\
\begin{equation}
\frac{\pi_{n}^{2}}{8 \cdot \int \limits_{0}^{1} \frac{t}{\sqrt{1 - t^n}} \ dt} = 1 + \sum \limits_{l=0}^{\infty} \frac{(2l+1)!!}{(2l+2)!!} \cdot \frac{1}{n \cdot (l+1) + 1} \cdot \prod \limits_{k=0}^{l} \frac{2 \cdot (nk+2)}{n + 2 \cdot (nk+2)}
\end{equation}
\end{theorem}

\begin{proof}

We have the Taylor series expansion

\begin{equation}
arcs_n(t) = \sum_{k=0}^{\infty} \binom{2k}{k} \frac{t^{nk+1}}{4^{k} \cdot (nk+1)}
\end{equation}

In other words:

\begin{equation}
arcs_n(t) = t + \sum \limits_{k=1}^{\infty} \frac{(2k-1) \cdot ... \cdot 5 \cdot 3 \cdot 1}{2k \cdot ... \cdot 4 \cdot 2} \cdot (\frac{t^{nk+1}}{(nk+1)})
\end{equation}

Observing that:

\begin{equation}
\frac{1}{2} \cdot (arcs_n(x))^2 = \int \limits_{0}^{x} \frac{arcs_n(t)}{\sqrt{1 - t^n}} \ dt
\end{equation}

We obtain: 
\begin{equation}
\frac{1}{2} \cdot (arcs_n(x))^2 =\int \limits_{0}^{x} \frac{t}{\sqrt{1 - t^n}} + \sum \limits_{k=1}^{\infty} \frac{(2k-1) \cdot ... \cdot 5 \cdot 3 \cdot 1}{2k \cdot ... \cdot 4 \cdot 2} \cdot (\frac{1}{nk+1}) \cdot \int \limits_{0}^{x} \frac{t^{nk+1}}{\sqrt{1 - t^n}} \ dt
\end{equation}

Next, define
\begin{equation}
I_l(x) = \int \limits_{0}^{x} \frac{t^{l + n}}{\sqrt{1-t^n}} \ dt = \int \limits_{0}^{x} t^{l + 1} \cdot \frac{t^{n-1}}{\sqrt{1 - t^n}} \ dt
\end{equation}

Integrating by parts and multipliying the integrand with $\frac{\sqrt{1-t^n}}{\sqrt{1-t^n}}$ : 

\begin{equation}
I_l(x) = - \frac{2}{n} \cdot x^{l+1} \cdot \sqrt{1 - x^n} + \frac{2(l+1)}{n} \cdot \int \limits_{0}^{x} \frac{t^{l} \cdot (1 - t^n)}{\sqrt{1-t^n}} \ dt
\end{equation}

Setting $x = 1$ we obtain:
\begin{equation}
\int \limits_{0}^{1} \frac{t^{l+n}}{\sqrt{1-t^n}} \ dt = \frac{2(l+1)}{n+2(l+1)} \cdot \int \limits_{0}^{1} \frac{t^{l}}{\sqrt{1-t^n}} \ dt
\end{equation}
Now, mapping $l \mapsto (nl+1)$ the result follows.

\end{proof}

This proof suggests that a number $g \leq n$ exists, such that
the system of integrals
\begin{equation}
0 \leq i < g: \ \int \limits_{0}^{1} \frac{x^i}{\sqrt{1 - x^n}} \ dx
\end{equation}
is maximal and $\mathbb{Q}$-linearly independent.

\begin{corollary}
For $n = 2$ we conclude: \\
\begin{equation}
\frac{\pi^2}{8} = \sum \limits_{l=0}^{\infty} \frac{1}{(2l+1)^2}
\end{equation}
And therefore: \\
\begin{equation}
\zeta(2) = \frac{\pi^2}{6}
\end{equation}
\end{corollary}

To compute the integral $\int \limits_{0}^{1} \frac{t}{\sqrt{1 - t^n}} \ dt$ for $n=3$ the following relation by Leo Königsberger might be useful: \cite{lk}
\begin{theorem}
For $p \in \mathbb{N}$:
\begin{equation}
\prod \limits_{s = 0}^{2p-1} (\zeta_{2p+1}^{s+1} - 1) \cdot \int \limits_{0}^{1} \frac{x^s}{\sqrt{x^{2p+1} - 1}} = \frac{(2 \pi)^p \cdot \sqrt{2p+1}}{(2p+1)^p \cdot (2p-1)!!}
\end{equation}
\end{theorem}

\section{Overview: Differential Forms on Algebraic Curves}
We study the family of hyperelliptic curves
\begin{equation}
C_n : \ y^2 = 1 - x^n
\end{equation} \\
The algebraic genus $g$ is defined as the dimension of the space of global holomorphic differentials on $C_n$.
\begin{equation}
g = \lfloor\frac{n-1}{2}\rfloor
\end{equation} \\
A standard basis for the space of holomorphic differentials is given by
\begin{equation}
\omega_k = \frac{x^k dx}{y}, \ \ 0 \leq k < g 
\end{equation}
For $1 \leq k \leq g$ we use the constants
\begin{equation}
\pi_{n / k} = 2k \cdot \int_{0}^{1}\frac{x^{k-1}}{\sqrt{1 - x^n}}\,dx
\end{equation} 
to describe particular values of the Gamma function for rational arguments. \\ \\
These constants correspond to the real A-periods for $0 \leq k < g$
\begin{equation}
\int \limits_{\Gamma_{A}} \omega_{k} = \int \limits_{\gamma +} \omega_{k} - \int \limits_{\gamma -} \omega_{k} = 2 \cdot \int \limits_{0}^{1} \frac{x^k}{\sqrt{1 - x^n}} \ dx
\end{equation}
used to construct the period matrix of the associated compact Riemann surface.

\section{Miscellanea}\label{sec5}
It is well known, that the singular values $K(\lambda^*(n))$ of the complete elliptic integral, where $\lambda^*(n) := \sqrt{\lambda(i \cdot \sqrt{n})}$ and $\lambda$ the modular lambda function, are expressible via the Gamma function \cite{wolfram}.
We obtain the following values:
\begin{equation}
K(\lambda^*(1)) = K(\frac{1}{\sqrt{2}}) = \frac{\varpi}{\sqrt{2}}
\end{equation}
\begin{equation}
K(\lambda^*(3)) = K(\frac{\sqrt{6} - \sqrt{2}}{4}) = \frac{\sqrt{3 \sqrt{3}} \cdot \pi_3}{4}
\end{equation}
\begin{equation}
K(\lambda^*(4)) = K((\sqrt{2} - 1)^2) = \frac{\varpi}{\sqrt{8}} + \frac{\varpi}{4}
\end{equation}
\begin{equation}
K(\lambda^*(9)) = K(\frac{(\sqrt{3}-1) \cdot (\sqrt{2} - \sqrt[4]{3})}{2}) = \sqrt{\frac{\sqrt{3}}{9} + \frac{1}{6}} \cdot \varpi
\end{equation}
\begin{equation}
K(\lambda^*(16)) = K((\sqrt{2} + 1)^2 \cdot (\sqrt[4]{2}-1)^4) = \frac{(\sqrt[4]{2} + 1)^2 \cdot \varpi}{2^3}
\end{equation}
\begin{equation}
K(\lambda^*(25)) = K(\frac{(\sqrt{10} - 2 \cdot \sqrt{2}) \cdot (3 - 2 \cdot \sqrt[4]{5})}{2}) = \frac{\sqrt{2}}{5} \cdot \varpi + \frac{1}{\sqrt{10}} \cdot \varpi
\end{equation} 
\\
Certain limits of the arithmetic-geometric-mean are also expressible via the Gamma function \cite{wikipedia}. We obtain the following values:
\begin{equation}
AGM(1,\sqrt{2}) = \frac{\pi}{\varpi}
\end{equation}
\begin{equation}
AGM(2, \sqrt{2 + \sqrt{3}}) = \frac{4 \cdot \pi_2}{\sqrt[4]{27} \cdot \pi_3}
\end{equation}
\begin{equation}
AGM(1+\sqrt{3}, \sqrt{8}) = \frac{\sqrt[12]{2^{27}} \cdot \pi_2}{\sqrt[4]{3 \cdot 4 \cdot \pi_{6}^{3} \cdot \sqrt[2]{3} \cdot \pi_3}}
\end{equation}
Since 
\begin{equation*}
K(k) = \frac{\pi}{2 \cdot AGM(1, \sqrt{1-k^2})}
\end{equation*}
and 
\begin{equation*}
\lambda^{*}(x)^2 + \lambda^{*}(\frac{1}{x})^2 = 1
\end{equation*}
we have for $n \in \mathbb{N}$
\begin{equation}
AGM(1, \lambda^{*}(\frac{1}{n})) = \frac{\pi}{2 \cdot K(\lambda^{*}(n))}
\end{equation}
\\ \\
A superellipse is defined for $n \in \mathbb{N}$ and $a,b > 0$ as
\begin{equation}
C_n : \ \lvert \frac{x}{a} \lvert^{n} \ + \ \lvert \frac{y}{b} \lvert^{n} \ = 1
\end{equation}
It is well known, that the area is given by
\begin{equation}
A_{C_{n}} = 4 \cdot ab \cdot \frac{\Gamma(\frac{n+1}{n})^2}{\Gamma(\frac{n+2}{n})} = 4 \cdot ab \cdot \frac{1}{2n} \cdot \frac{\Gamma(\frac{1}{n})^2}{\Gamma(\frac{2}{n})}
\end{equation}
For the case $n = 2^{\nu}$ we calculate
\begin{equation*}
A_{C_{2^{\nu}}} = 4 \cdot ab \cdot \frac{1}{2^{\nu + 1}} \cdot \frac{2^{\nu - 1} \cdot \pi_{2^{\nu}} \cdot \sqrt{2^{\nu - 1} \cdot \pi_{2^{\nu - 1} \cdot \sqrt{...}}}}{\sqrt{2^{\nu - 2} \cdot \pi_{2^{\nu - 1} \cdot \sqrt{...}}}}
\end{equation*}
With ($\nu - 1$) nested square-roots, we obtain: \\
\begin{equation}
\forall \nu \in \mathbb{N}: \ A_{C_{2^{\nu}}} = \pi_{2^{\nu}} \cdot ab \cdot \sqrt{2 \sqrt{2 \sqrt{2 \sqrt{...}}}}
\end{equation}
where $0$ nested square-roots are to be understood as the factor $1$. \\ \\

\end{document}